\newtheorem{theorem}{Theorem}
\newtheorem{Lemma}[theorem]{Lemma}
\newtheorem{Corollary}[theorem]{Corollary}
\newtheorem{Fact}[theorem]{Fact}
\newenvironment{proof}{\paragraph{Proof}}{\hfill$\square$}
\DeclareMathOperator{\Sek}{S}
\DeclareMathOperator{\Tek}{T}
\DeclareMathOperator{\KZ}{KZ}
\title{Network Routing on Regular Digraphs and Their Line Graphs}
\author{Vance Faber and Noah Streib\\
Center for Computing Sciences\\
Bowie, Maryland\\
Revision: March 19, 2023}
\date{}
\begin{document}
\maketitle

\begin{abstract} 
This paper concerns all-to-all network routing on regular digraphs.  In 
previous work we focused on efficient 
routing in highly symmetric digraphs with low diameter for fixed 
degree.  Here, we show that every connected regular digraph has an all-to-all 
routing scheme and associated schedule with no waiting.  In fact, this 
routing scheme becomes more efficient as the diameter goes down with 
respect to the degree and number of vertices.  
Lastly, we examine the simple scheduling algorithm called 
``farthest-distance-first'' and prove that it yields optimal schedules
for all-to-all communication in networks of interest, including Kautz graphs. 
\end{abstract}

\section{Introduction}

In the first two sections of this paper, we provide a general method 
for scheduling an all-to-all communication pattern on regular networks.  
In particular, we show that, for any $d$-regular network of diameter $D$, 
an all-to-all can be scheduled in time
\[
\mu(d, D)=\frac{D d^{D+1}-(D+1) d^{D}+1}{(1-d)^{2}}
\]
and furthermore, once a message has begins its journey through the 
network, it never has to wait at an intermediate node prior to arriving
at its final destination.  The value $\mu(d, D)$ is independent of the
number of nodes in the network, and thus becomes more useful for highly
efficient networks, like those close to Moore's bound~\cite{deg-diam}.
In fact, in those cases, this bound is close to optimal.  In Section~\ref{sec:lg},
we show that we can do even better than $\mu(d, D)$ by taking advantage
of the structure of certain dense graphs.
Lastly, in Section~\ref{sec:FDF}, we show that a local routing algorithm called 
``farthest-distance-first'' also performs optimally for all-to-alls in cases of 
interest, despite the fact that it is has no knowledge of global 
structure of the network.

\subsection{Notation and Terminology}

We begin by defining some general terms and concepts from network routing.

{\bf Network model}. We model a network as a connected directed graph, or 
\emph{digraph}, with vertices representing routers and edges 
representing connections between routers.  For simplicity, we assume 
that the transfer time and capacity of every connection is identical. 
The graph is not necessarily simple---there may be multiple 
connections, virtual or real, between two vertices.  We also assume 
that each router has in-degree and out-degree $d$ (i.e the digraph is 
$d$-\emph{regular}) for some positive integer $d$, and that on a single 
clock tick the router can perform any one-to-one function of inputs to 
outputs. The \emph{distance} between a vertex $u$ and a vertex $v$ is 
the length of the shortest directed path from $u$ to $v$. 

{\bf All-to-all routing}.  We have discussed elsewhere~\cite{DF, CFS} the 
importance of this communication pattern, in which each processor in a 
network needs to send a message to every other processor. The pattern 
shows up, for example, in any iterative program that needs to multiply 
two matrices with columns distributed over processors.  Each processor 
has a block of data it needs to get from every other processor and so 
each processor needs to send a block of data to every other processor. 
A long-sought goal is given a fixed degree and number of processors, 
find a network which gives the best all-to all time. In order compare 
two networks with the same degree but a different number of processors, 
we divide the time for all-to-all by the number of processors. We call 
this the \emph{efficiency} of the network.

{\bf Factors}. A \emph{1-factor} in a directed graph $G=(V,E)$ is 
a subgraph with both in-degree and out-degree equal to 1. (Some authors 
have called this a 2-factor.  Our definition seems more consistent with 
the terminology in undirected graphs, where each vertex in the subgraph 
induced by the edges of a $k$-factor has degree $k$; in particular,
a 1-factor in an undirected graph is a matching.)  It is known 
that, for every regular directed graph with in-degree and out-degree $d$,
the edge set can be partitioned into $d$ 1-factors.  For completeness, 
we give the proof of this fact here. 

\begin{Fact}
Let $G=(V,E)$ be a $d$-regular digraph.  Then $E$ can be partitioned 
into $d$ 1-factors.
\end{Fact}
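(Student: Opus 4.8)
The plan is to reduce the statement to the classical fact that every regular bipartite (multi)graph decomposes into perfect matchings. First I would build an auxiliary bipartite graph $H$ with parts $V^+ = \{v^+ : v \in V\}$ and $V^- = \{v^- : v \in V\}$, placing one edge between $u^+$ and $v^-$ for each directed edge $(u,v) \in E$ (with multiplicity, so that parallel arcs in $G$ become parallel edges in $H$). Since $G$ is $d$-regular, every $u^+$ has degree equal to the out-degree of $u$, namely $d$, and every $v^-$ has degree equal to the in-degree of $v$, also $d$; hence $H$ is $d$-regular bipartite with $|V^+| = |V^-| = |V|$.

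Next I would show that $H$ decomposes into $d$ perfect matchings by peeling them off one at a time. The engine is Hall's theorem: for any $S \subseteq V^+$, the $d|S|$ edges incident to $S$ all land in the neighborhood $N(S) \subseteq V^-$, which can receive at most $d|N(S)|$ edges, so $|N(S)| \geq |S|$. Hall's condition holds, so $H$ has a perfect matching $M$. Removing $M$ leaves a $(d-1)$-regular bipartite graph, and induction on $d$ yields a decomposition $H = M_1 \sqcup \cdots \sqcup M_d$ into $d$ perfect matchings.

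Finally I would translate each $M_i$ back to $G$. A perfect matching in $H$ pairs each $u^+$ with exactly one $v^-$ and each $v^-$ with exactly one $u^+$; reading a matched pair $\{u^+, v^-\}$ as the arc $(u,v)$ produces a set of arcs $F_i$ in which every vertex has out-degree exactly $1$ (from its $+$ copy) and in-degree exactly $1$ (from its $-$ copy). Thus each $F_i$ is a 1-factor, and $\{F_1, \dots, F_d\}$ partitions $E$.

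The main obstacle — really the only nontrivial ingredient — is establishing the existence of the perfect matching cleanly; everything else is bookkeeping in the $G \leftrightarrow H$ correspondence. The one point I would be careful about is that we are working with multigraphs, but Hall's theorem applies to bipartite multigraphs without change, so the counting argument above goes through verbatim.
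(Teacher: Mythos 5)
Your proof is correct and follows essentially the same route as the paper: both construct the bipartite double cover (one part for out-copies, one for in-copies), invoke Hall's theorem to peel off perfect matchings from the resulting $d$-regular bipartite multigraph, and translate each matching back to a directed 1-factor. You simply spell out the Hall's-condition counting and the induction on $d$, which the paper leaves implicit.
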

\begin{proof}  Form an auxiliary graph $B$ with two new vertices $u^{\prime}$ 
and $u^{\prime \prime}$ for each vertex $u$. The edges of $B$ are the 
pairs $\left(u^{\prime}, v^{\prime \prime}\right)$ where $(u, v)$ is a 
directed edge in $G$. The undirected graph $B$ is bipartite and $d$-regular,
and so by Hall's Marriage Theorem, it can be decomposed 
into $d$ 1-factors. Each of these 1-factors corresponds to a directed 
1-factor in $G$.
\end{proof} \\

{\bf Routing schemes}. Let $G=(V,E)$ be a directed graph. A 
\emph{routing task} $R$ is a collection of pairs $(u,v) \in V \times 
V$.  A \emph{routing scheme} $S := S(R)$ for $R$ is a collection of 
directed walks $P(u,v)$ in $G$, one for each pair in $R$, from the 
source $u$ to the destination $v$.  The \emph{dilation} $D$ of $S$ is the 
maximum length of a walk in $S$.  The \emph{congestion} $C$ of $S$ is the 
maximum, over all $e \in E$, of the number of times $e$ appears in $S$.  
A \emph{routing schedule} $\Sigma$ of time $\tau$ assigns a time $t$ 
with $1 \leq t \leq \tau$ to every edge of every walk in $S$ such that
\begin{enumerate}
  \item on every walk, time is increasing,
  \item no two edges have the same time.
\end{enumerate}
The time $\tau$ is sometimes referred to as the \emph{makespan} of $S$.
Clearly, both $D$ and $C$ are lower bounds on $\tau$.

A schedule has \emph{no waiting} if the times along each walk are successive. 
We often represent a schedule as a rectangular array with the rows 
corresponding to edges of $G$ and the $\tau$ columns corresponding to 
time. The entries in the array denote which path is utilizing the given 
edge at the given time.  An entry can be empty if no path is using the 
edge at that time.  See Figure~\ref{fig:sched_example} for an example.
Note that if we have two schemes $\Gamma_{1}$ and 
$\Gamma_{2}$ consisting of disjoint pairs then $\Sigma_{1} \Sigma_{2}$ 
is a schedule of time $\tau_{1}+\tau_{2}$ for their union.

\begin{figure}[!htb]
\begin{center}
\begin{overpic}[width=0.75\textwidth,grid=false,tics=20]{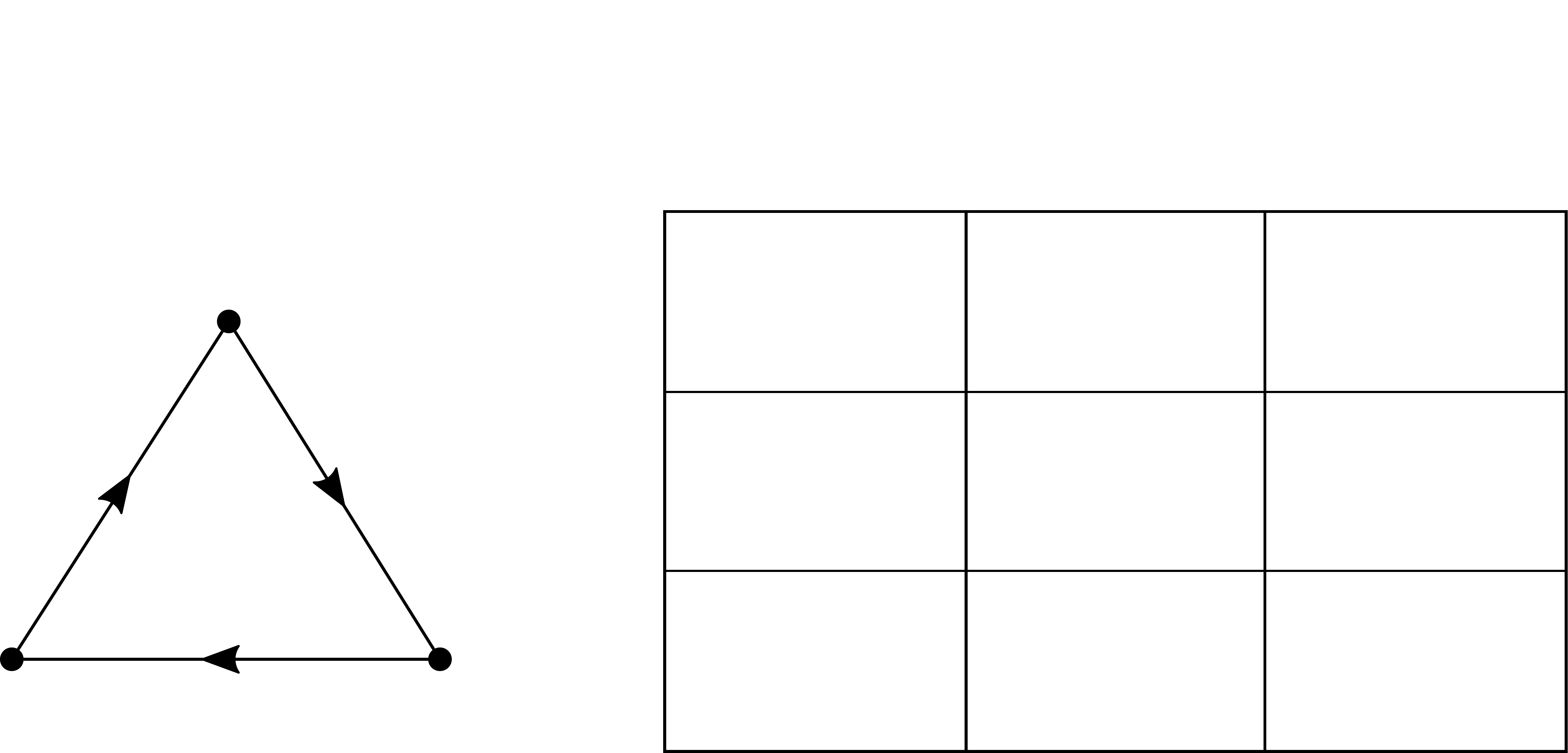}
\put(2.7,17){$e_1$}
\put(23,17){$e_2$}
\put(13,2){$e_3$}
\put(-2,3.3){$a$}
\put(13.7,29.5){$b$}
\put(29,3.3){$c$}
\put(38.5,28){$e_1$}
\put(38.5,16){$e_2$}
\put(38.5,5){$e_3$}
\put(47,28){$P(a,b)$}
\put(47,16){$P(b,c)$}
\put(47,5){$P(c,a)$}
\put(66,28){$P(a,c)$}
\put(66,16){$P(b,a)$}
\put(66,5){$P(c,b)$}
\put(85,28){$P(c,b)$}
\put(85,16){$P(a,c)$}
\put(85,5){$P(b,a)$}
\put(51,36){$1$}
\put(70,36){$2$}
\put(89,36){$3$}
\put(68,40.5){$\text{time}$}
\end{overpic}
\caption{On the right, a rectangular array representing the schedule
of an all-to-all for the graph on the left}
\end{center}
\label{fig:sched_example}
\end{figure}

A \emph{labeling} of $G$ is an assignment of labels 
$0 \leq L \leq \Delta$ to the edges of $G$ such that, for each $v \in 
V$, no two out-edges of $v$ and no two in-edges of $v$ have the same 
label. Suppose $P$ and $Q$ are two paths in a routing schedule for a 
labeled graph $G$ starting from the same vertex. We say $P$ and $Q$ are 
\emph{label-disjoint} if edges assigned the same time have different labels.

\section{Routing in regular digraphs}

An important construction will be the complete directed $d$-ary 
tree of depth $D$, which we denote $T$.

\begin{figure}[!htb]
\begin{center}
\includegraphics[width=0.25\textwidth]{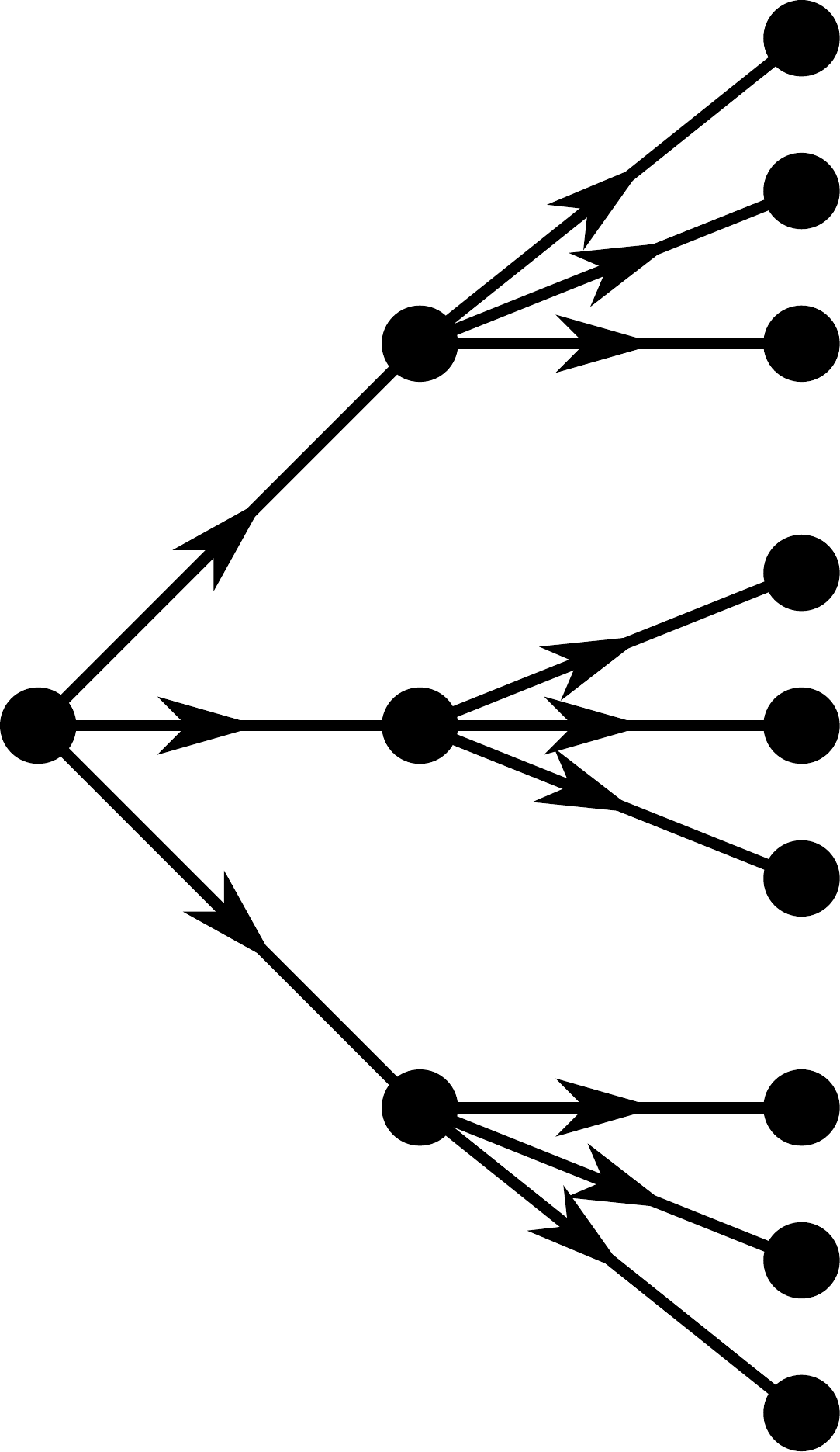}
\caption{A complete $3$-ary tree of depth $2$}
\end{center}
\label{fig:dary}
\end{figure}

{\bf Paths, walks and words}. We differentiate between a \emph{path} which is 
acyclic and a \emph{walk} which is allowed to repeat vertices or edges. We say 
a set of walks $W$ is a \emph{walk cover} of the digraph $G$ if for every pair 
of vertices $u$ and $v$ (where $u$ and $v$ can be equal) there is a unique 
walk from $u$ to $v$ in $W$.  The walks of length $k$ in the walk cover form 
a set $W_{k}(G)$.  We let $K=K(G, W)$ be the set of indices of non-empty 
$W_{k}$.  We also let $w_{k}=\left|W_{k}\right|$. 

Let $G$ be a regular digraph of degree $d$ and let $\left\{F_{i} \mid 0 
\leq i<d\right\}$ be a 1-factorization and $u$ be a vertex. If 
$\omega=\left(F_{c_{0}}, F_{c_{1}}, \ldots, F_{c_{k-1}}\right)$ is a 
\emph{word} of length $k$ (which we will often write just using the 
indices of the 1-factors; i.e. $\omega=\left(c_{0}, c_{1}, \ldots, 
c_{k-1}\right)$) then $u \omega$ describes both a path to a unique 
vertex in the complete $d$-ary tree $T$ and a walk in the graph $G$. 
The set of all these words forms a subtree of $T$. We can label the 
vertices $v$ of distance $k$ from the root $u$ by elements of the 
abelian group $A=\mathbb{Z}_{d}^{k}$ and we assume the elements of 
$A,\left(c_{0}, c_{1}, \ldots, c_{k-1}\right)$, to be colexicographically 
ordered. This ordering is the same ordering we get by identifying a 
vertex with a base $d$ integer $c_{0}+c_{1} d \ldots+c_{k-1} d^{k-1}$ 
and it also provides a labeling of the edges of the tree $T$ by the 
members of the group $\mathbb{Z}_{d}$. Thus there is a 1-1 
correspondence between paths from $u$ and the set of words in the 
$c_{i}$ of length at most $D$.

Let $w=\overrightarrow{1}$ be the element of $A$ corresponding to 
$1+\ldots+d^{k-1}=\frac{d^{k}-1}{d-1}$. Consider the $d$ paths 
$T(i,0)=iw$ for $0 \leq i<d$ (that is, $iw=(i,i,\dots,i))$. Clearly no 
two of these paths share an edge in the tree.  Fix $z$ in the subgroup 
of $A$ consisting of elements of the form $\left(c_{0}, c_{1}, \ldots, 
c_{k-2}, 0\right)$; that is, $0 \leq z<d^{k-1}$. Then the $d$ paths 
$T(i,z)=z+iw$, $0 \leq i<d$, are also label disjoint because the 
difference between any two is a multiple of $w$ (to be clear, the sum 
$z+iw$ is over two elements of $\mathbb{Z}_{d}^{k}$). This proves the 
following lemma.

\begin{Lemma}\label{lem:dtree}
There exists a labeling of the edges of the complete directed 
$d$-ary tree $T$ with root $u$ and of depth $D$ by members of 
$\left\{F_{i} \mid 0 \leq i<d\right\}$ such that the set of all paths 
of length $k$ have a disjoint decomposition into $d^{k-1}$ sets, 
$T(z)=\{T(i,z) \mid 0 \leq i<d\}$ with $0 \leq z<d^{k-1}$, of $d$ 
label-disjoint paths $T(i,z)=z+iw, 0 \leq i<d$.
\end{Lemma}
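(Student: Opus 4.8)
The plan is to exhibit the labeling explicitly and then verify the two claimed properties---label-disjointness inside each $T(z)$, and that the blocks $T(z)$ partition the length-$k$ paths---by elementary group arithmetic in $A=\mathbb{Z}_d^k$.

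First I would fix the labeling: on the path encoded by the word $(c_0,c_1,\dots,c_{k-1})$, assign the edge descending from depth $j$ to depth $j+1$ the label $c_j\in\mathbb{Z}_d$. This is well defined because the prefix $(c_0,\dots,c_{j-1})$ names a unique tree vertex and the $d$ children of that vertex are exactly the $d$ choices of $c_j$; hence the $d$ out-edges at every vertex receive the $d$ distinct labels $0,\dots,d-1$, and since every non-root vertex of $T$ has a single in-edge the in-edge condition is vacuous. Thus it is a valid labeling, and it is a single labeling of $T$ that serves all depths $k$ simultaneously. Under the natural no-waiting schedule the $j$-th edge of any root-path carries time $j$, so two root-paths are label-disjoint precisely when their $j$-th coordinates differ for every $j$.

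Next I would check label-disjointness within $T(z)$. Writing $z=(z_0,\dots,z_{k-2},0)$ and $w=(1,\dots,1)$, the word of $T(i,z)=z+iw$ has $j$-th coordinate $z_j+i$ (all arithmetic in $\mathbb{Z}_d$). For $i\neq i'$ the coordinates of $T(i,z)$ and $T(i',z)$ differ at position $j$ by $i-i'$, which is nonzero in $\mathbb{Z}_d$ since $0\le i,i'<d$; so the labels disagree at every depth and the $d$ paths of $T(z)$ are pairwise label-disjoint. Equivalently, any two differ by a nonzero multiple of $w$, and every coordinate of $w$ is a unit, which is the observation already recorded before the statement.

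Finally I would verify the partition. The set $\langle w\rangle=\{jw:j\in\mathbb{Z}_d\}$ is a subgroup of order $d$ (as $jw=0$ forces $j=0$), and $T(z)=z+\langle w\rangle$ is its coset through $z$. Each coset meets the set $\{z\in A:z_{k-1}=0\}$ in exactly one point, since for a given word $y$ the equation $y_{k-1}+j=0$ has the unique solution $j=-y_{k-1}$; hence these $d^{k-1}$ representatives form a transversal of $\langle w\rangle$ in $A$. The cosets therefore partition all $d^k$ words of length $k$ into $d^{k-1}$ blocks of size $d$, which is exactly the claimed decomposition. There is no real obstacle here: the whole argument is bookkeeping in $\mathbb{Z}_d^k$, and the only point that needs a moment's care is confirming that fixing the last coordinate to $0$ really does pick one representative from every diagonal coset, i.e.\ that $\{z_{k-1}=0\}$ is a complementary transversal to the diagonal subgroup $\langle w\rangle$.
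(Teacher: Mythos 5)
Your proof is correct and follows essentially the same route as the paper: the coordinate labeling of the tree, label-disjointness of $z+iw$ and $z+i'w$ because every coordinate of $(i-i')w$ is a unit in $\mathbb{Z}_d$, and the coset decomposition of $A=\mathbb{Z}_d^k$ by the diagonal subgroup $\langle w\rangle$. You spell out two points the paper leaves implicit (validity of the labeling and the transversal argument showing the $T(z)$ actually partition all $d^k$ words), but the underlying argument is the same.
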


\begin{Lemma}\label{lem:scheme}
Let $G$ be a regular digraph of degree $d$ with a 
1-factorization $F=\left\{F_{i} \mid 0 \leq i<d\right\}$. Suppose that 
for every vertex $u$, we have a set of distinct words $W(u)$ in $F$ of 
length $k$. Let $S$ be the scheme consisting of the union over 
all $u$ of all the walks $u W(u)$ in $G$. Then there exists a routing 
schedule $\Sigma$ for $S$ with no waiting and time $\tau \leq k 
d^{k-1}$.
\end{Lemma}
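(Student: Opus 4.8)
The plan is to construct the schedule blockwise, driven by the label-disjoint decomposition of Lemma~\ref{lem:dtree}, and to reduce the absence of collisions to two elementary properties of a $1$-factorization: each edge of $G$ lies in exactly one factor, and each factor acts as a permutation of $V$.

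First I would translate every walk $u\omega$ of $S$ into tree language. Writing $\omega=(c_0,\ldots,c_{k-1})$, Lemma~\ref{lem:dtree} expresses $\omega$ uniquely as $\omega=z+iw$ with $w=\overrightarrow{1}$, $i=c_{k-1}\in\mathbb{Z}_d$, and $z=(c_0-c_{k-1},\ldots,c_{k-2}-c_{k-1},0)$ in the subgroup of elements with last coordinate $0$. There are exactly $d^{k-1}$ such $z$, and identifying $z$ with the integer $\zeta=\sum_{j=0}^{k-2}z_j d^{j}$ I split time into $d^{k-1}$ consecutive windows, giving block $\zeta$ the slot $\{k\zeta+1,\ldots,k\zeta+k\}$ and starting every walk with $z(\omega)=z$ at time $k\zeta+1$. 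Because each walk has length $k$ and runs without waiting, it is confined to its own window; walks in different blocks are therefore never active at the same instant and cannot collide. This already yields a no-waiting schedule of makespan $\tau\le k d^{k-1}$.

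The core step is to verify that inside a single block no physical edge is used by two walks at the same time. Since all walks of block $z$ start at the common time $t_0=k\zeta+1$ and run $k$ steps, I compare two distinct walks $u(z+iw)$ and $u'(z+i'w)$ position by position. If $i\neq i'$, then at step $j$ the factor indices are $z_j+i$ and $z_j+i'$, which differ in $\mathbb{Z}_d$; as distinct factors are edge-disjoint, the two walks traverse distinct edges at every common time, so no edge carries both at once. If $i=i'$ but $u\neq u'$, the two walks read the same factor word, and since each $F_c$ permutes $V$, after any number of steps the walks sit at distinct vertices and hence use distinct out-edges. The case $i=i'$, $u=u'$ does not arise, as $W(u)$ consists of distinct words.

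Assembling these facts gives the result: time increases along each walk, within a block the case analysis rules out simultaneous reuse of an edge, and across blocks the disjoint time windows do the same, so the array is a valid schedule with the asserted makespan and no waiting. The step I expect to require the most care is the within-block comparison across different source vertices: label-disjointness is defined in Lemma~\ref{lem:dtree} only for paths issuing from a common root of $T$, so I must make explicit that what actually prevents a collision in $G$ is the combination of a shared start time with distinct factor indices at each step, a property independent of the starting vertex. Making this bridge from the tree to the graph precise is the crux of the argument.
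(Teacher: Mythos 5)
Your proposal is correct, and it follows the paper's construction exactly: the same decomposition of each word as $z+iw$ via Lemma~\ref{lem:dtree}, the same partition of time into $d^{k-1}$ consecutive windows of length $k$ indexed by $z$, and the same observation that each walk occupies $k$ consecutive slots so there is no waiting. The only place you diverge is in verifying that no edge is reused within a block, and there your argument is a clean, direct alternative to the paper's. The paper argues by minimal counterexample: if two walks $z+iw$ and $z+jw$ first share an edge at time $t$, the common edge forces $z_t+i=z_t+j$, hence $i=j$, and then the in-degree condition of the 1-factorization forces the edges at time $t-1$ to coincide as well, contradicting minimality of $t$. You instead split into cases: if $i\neq i'$ the two walks lie in different factors at every common instant (so the edges are automatically distinct, since the factors partition $E$), and if $i=i'$ with different roots the walks apply the same sequence of permutations to distinct starting vertices and so never occupy the same vertex, hence never the same out-edge. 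Both arguments rest on the same two facts about a 1-factorization (edge-disjointness of factors and the permutation property), but yours avoids the backtracking step and makes explicit the point the paper leaves implicit, namely why label-disjointness, defined only for paths from a common root, still controls collisions between walks issued from different roots. That bridge is exactly the right thing to worry about, and you close it correctly.
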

\begin{proof}
For each vertex $u$, $W(u)$ is a subtree of the complete $d$-ary 
tree $T(u)$ rooted at $u$. Consider the sets of paths $T_{u}(z)=\left\{T_{u}(i, z) 
\mid 0 \leq i<d\right\}$ in $T(u)$ from Lemma~\ref{lem:dtree} for fixed $z$. We form 
a schedule for each $z$ which is the union of the $T_{u}(z)$ for all 
$u$ where we leave an entry empty if $z$ is a leaf of $T(u) \backslash 
W(u)$. We have to show that this collection of paths meets the 
definition of a schedule by showing that no path $T_{u}(i, z)=z+i w$ 
and $T_{v}(j, z)=z+j w$ have a common edge and time. Let's suppose to 
the contrary that the first common edge occurs at time $t$. Then 
recalling that $z=\left(z_{0}, z_{1}, \ldots, z_{k}\right)$ and $j w=(j, j, 
\ldots, j)$ are vectors, the label of that common edge is both 
$z_{t}+i$ and $z_{t}+j$ which means that $i=j$ in $\mathbb{Z}_{d}$. But 
this means that the edges labeled $z_{t-1}+i$ on $T_{u}(i, z)=z+i w$ 
and $z_{t-1}+j$ on $T_{v}(j, z)=z+j w$, which are both in-edges to the 
same vertex in a 1-factorization, have the same label and therefore 
must be the same edge which contradicts the assumption that the first 
common edge was at time $t$. 
\end{proof}

\begin{theorem}
Let $G$ be a regular digraph 
of degree $d$ with 1-factorization $F$. Suppose there is an increasing 
sequence of lengths $\left(D_{k}\right)$ so that
\begin{enumerate}
  \item for every pair of vertices $(u, v)$ of $G$ there is a word 
  $\omega(u, v)$ in $F$ with length equal to one of the $D_{k}$;
  \item $v$ is on the walk $u W(u, v)$;
  \item for each $u$, the $W(u, v)$ are distinct.
\end{enumerate}
Then there exists a routing schedule $\Sigma$ for $S$, with $S$ as defined
in Lemma~\ref{lem:scheme}, with no waiting 
and time $\tau \leq \sum_{k} D_{k} d^{D_{k}-1}$.
\end{theorem}
\begin{proof}
The theorem follows directly from Lemma~\ref{lem:scheme}. Note that if the walk 
$u W(u, v)$ reaches $v$ before its end, we might choose to leave the 
rest of its schedule blank or we might choose to continue the walk 
until we eventually return to $v$ as the final destination.  We include 
this case as having such longer-than-necessary paths can be reduce the
number of non-zero $D_{k}$, which can be advantageous for scheduling and
bounding purposes.
\end{proof} \\

We call this type of routing a \emph{regular routing}.
By choosing $D_{k}=k$ for $1 \leq k \leq D$ we are guaranteed to cover 
every shortest path in a connected regular graph of diameter $D$. This gives us 
the following corollary.

\begin{Corollary}
For every connected regular digraph of degree $d$ and diameter $D$ there is a 
regular routing of time
\[
\tau \leq \mu(d, D)=\frac{D d^{D+1}-(D+1) d^{D}+1}{(1-d)^{2}}.
\]
\end{Corollary}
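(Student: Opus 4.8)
The plan is to apply the preceding theorem with the specific choice $D_k = k$ for $1 \le k \le D$, and then to evaluate the resulting bound in closed form. The conceptual work lies entirely in verifying the three hypotheses of that theorem; the final inequality will then reduce to a routine summation.

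First I would fix a $1$-factorization $F = \{F_i \mid 0 \le i < d\}$ and observe that, because each $F_i$ has out-degree exactly $1$ at every vertex, a word $\omega = (c_0, \dots, c_{k-1})$ together with a starting vertex $u$ determines a unique walk $u\omega$ in $G$, and conversely every walk out of $u$ reads off a unique word by recording, at each step, the index of the $1$-factor containing the edge traversed. For each ordered pair $(u,v)$ with $u \ne v$, I would select a shortest directed path from $u$ to $v$ (breaking ties arbitrarily) and let $\omega(u,v)$ be the word it reads off. Since $G$ is connected with diameter $D$, this path has length $\mathrm{dist}(u,v) \in \{1, \dots, D\}$, so $\omega(u,v)$ has length equal to one of the $D_k$, which is hypothesis~(1). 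By construction the walk $u\,\omega(u,v)$ is exactly this path and so ends at $v$, giving hypothesis~(2).

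For hypothesis~(3) I would use the bijection above in the other direction: each word out of $u$ determines a unique terminal vertex, so if $\omega(u,v) = \omega(u,v')$ then the two walks coincide and hence $v = v'$. Thus the words $\{\omega(u,v) \mid v \ne u\}$ are pairwise distinct for each fixed $u$, and the sequence $(D_k) = (1, 2, \dots, D)$ is strictly increasing as required. With all hypotheses met, the theorem yields a no-waiting schedule for the all-to-all scheme $S$ with
\[
\tau \;\le\; \sum_{k=1}^{D} D_k\, d^{D_k - 1} \;=\; \sum_{k=1}^{D} k\, d^{\,k-1}.
\]

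It then remains only to put this sum in closed form. I would evaluate $\sum_{k=1}^{D} k\,x^{k-1}$ as the derivative of the geometric series $\sum_{k=0}^{D} x^{k} = (x^{D+1}-1)/(x-1)$, which gives $\big(D x^{D+1} - (D+1)x^{D} + 1\big)/(x-1)^2$; substituting $x = d$ and using $(d-1)^2 = (1-d)^2$ produces exactly $\mu(d,D)$. I do not expect any genuine obstacle here: the only point demanding care is the clean word--endpoint bijection underlying hypotheses~(1)--(3), together with the implicit restriction to pairs $u \ne v$ so that the relevant distances land in $\{1, \dots, D\}$ rather than including the trivial length-$0$ case.
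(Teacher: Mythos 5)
Your proposal is correct and follows essentially the same route as the paper: both apply the preceding theorem with $D_k = k$ for $1 \le k \le D$ to the trees of shortest paths rooted at each vertex, and both evaluate $\sum_{k=1}^{D} k d^{k-1}$ by differentiating the geometric series. Your additional care in spelling out the word--endpoint bijection underlying hypotheses (1)--(3) is a welcome elaboration of what the paper leaves implicit, but it is not a different argument.
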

\begin{proof}
For each vertex $u$, there is a set of shortest paths of length $k \leq 
D$ which form a tree $T(u)$. Thus, by the theorem, these 
paths yield a regular routing of time
\[
\tau \leq \sum_{k=1}^{D} k d^{k-1}=\sum_{k=0}^{D-1}(k+1) d^{k} .
\]
However
\[
\sum_{k=0}^{D-1}(k+1) x^{k}=\frac{d}{d x} \sum_{k=0}^{D} x^{k}=\frac{d}{d x} \frac{x^{D+1}-1}{x-1}=\frac{D x^{D+1}-(D+1) x^{D}+1}{(x-1)^{2}} .
\]
\end{proof} \\

In other words, $\mu(d, D)$ is an upper bound on the makespan of an 
all-to-all for every connected $d$-regular digraph of diameter $D$, 
and moreover we can produce such a schedule with no waiting.  

\subsection{Bounds for dense graphs}

Let $G$ be a connected $d$-regular digraph of diameter $D$ with $n$ 
vertices.  Given a vertex $u$, let $N_k(u)$ be the set of vertices 
distance $k$ from $u$ in $G$, and let $n_k(u) = |N_k(u)|$.  Thus, the 
number of edges used in paths from $u$ to the vertices in $N_k(u)$ in 
any all-to-all routing scheme is at least $kn_k(u)$.  Since the graph 
has degree $d$, note that $n_{k+1}(u) \leq dn_k(u)$ and $n \leq M(d,D)$,
where 
\[
M(d,m) = \sum_{k=1}^m d^k = \frac{d^{m+1}-1}{d-1}.
\]
We say that $G$ is \emph{dense} if $M(d,D-1) < n \leq M(d,D)$, and the
\emph{density} of $G$ is $n/M(d,D)$.  Notice that $n$ must be at most 
$M(d,D)$; graphs achieving this bound are called directed Moore graphs, 
although no such graphs are known to exist for non-trivial $d$ and $D$.

\begin{theorem}\label{thm:dense}
Let $(G_n)$ be a sequence of dense $d$-regular graphs of diameter $D$
with increasing size, where $G_n$ has $n$ vertices.  As $n$ increases,
the ratio of the optimal makespan for an all-to-all to $\mu(d,D)$ approaches 1.
\end{theorem}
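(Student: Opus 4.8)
The plan is to sandwich the optimal makespan between two bounds whose ratio tends to $1$. The upper bound is immediate: the preceding Corollary already produces a no-waiting all-to-all schedule of makespan at most $\mu(d,D)$, so $\tau_{\mathrm{opt}}(G_n)\le\mu(d,D)$ and the ratio is at most $1$. Everything therefore rests on a matching \emph{lower} bound on $\tau_{\mathrm{opt}}(G_n)$, which I would obtain from congestion.

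First I would bound the total traffic. In any all-to-all scheme the walk from $u$ to a vertex at distance $k$ uses at least $k$ edges, so the total number of edge-slots consumed is at least $\sum_u\sum_k k\,n_k(u)$. Since $G_n$ has exactly $nd$ edges and each carries at most one message per clock tick, the congestion $C$ — and hence $\tau$, as $\tau\ge C$ — is at least the average load $\frac{1}{nd}\sum_u\sum_k k\,n_k(u)$. To compare this with $\mu(d,D)$ I would bound $\sum_k k\,n_k(u)$ below using only the structural facts $n_k(u)\le d^k$ and $\sum_{k=1}^{D}n_k(u)=n-1$. Minimizing a sum with increasing weights $k$ under a fixed total pushes mass toward small $k$, so the minimizing profile fills the tree levels greedily from the bottom: $n_k=d^k$ for $k<D$ and $n_D=d^D-\delta$, where $\delta:=M(d,D)-n$ is the deficiency (this profile is feasible since $n_D=d^D-\delta\le d\,n_{D-1}$, and $\delta<d^D$ for a dense graph, so the shortfall never spills below level $D$). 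This yields $\sum_k k\,n_k(u)\ge\sum_{k=1}^{D}k\,d^{k}-D\delta=d\,\mu(d,D)-D\delta$.

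Substituting gives $\tau_{\mathrm{opt}}(G_n)\ge\mu(d,D)-D\delta/d$, so the ratio is at least $1-\frac{D\delta}{d\,\mu(d,D)}$. Writing $\delta=M(d,D)\,(1-\rho_n)$ with $\rho_n=n/M(d,D)$ the density of $G_n$, the final step is the elementary estimate $\frac{D\,M(d,D)}{d\,\mu(d,D)}\le\frac{d}{d-1}$, which I would get by bounding $\mu(d,D)\ge D\,d^{D-1}$ (its top term) against $M(d,D)\le\frac{d^{D+1}}{d-1}$. Hence the ratio is at least $1-\frac{d}{d-1}\,(1-\rho_n)$, and together with the upper bound of $1$ this squeezes the ratio to $1$ precisely as $\rho_n\to1$.

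The main obstacle — and the point I would be most careful to state — is that the conclusion is genuinely driven by the density tending to $1$, not merely by $n$ growing: the lower bound degrades linearly in the density defect $1-\rho_n$, so I would make explicit that the dense sequences in question (those approaching the Moore bound $M(d,D)$ as their size increases) satisfy $\rho_n\to1$, which is exactly what forces the distance distribution $\{n_k(u)\}$ toward the full-tree profile $\{d^k\}$ and thereby makes the cheap congestion bound asymptotically tight. The remaining technical care is routine: verifying that the greedy bottom-filled profile is the true minimizer of $\sum_k k\,n_k(u)$ and confirming $\delta<d^D$ from the dense hypothesis.
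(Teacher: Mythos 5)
Your proposal is correct and takes essentially the same approach as the paper: the paper's proof likewise lower-bounds the optimal makespan by dividing the total traffic $\sum_{u}\sum_{k} k\,n_k(u)$ over the $nd$ edges, minimizes the distance profile subject to $n_k(u)\le d^k$ under the density hypothesis (the bottom-filled profile, giving $\sigma \ge \mu + \tfrac{D}{d}(n-M(d,D))$), and compares against the upper bound $\mu(d,D)$ from the Corollary. Your explicit bookkeeping with $\delta$ and $\rho_n$, and your remark that the limit is really driven by the density tending to $1$ rather than by $n$ merely increasing, only make explicit what the paper leaves implicit.
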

\begin{proof}
A lower bound on the makespan for an all-to-all of $G_n$ can be obtained
by assuming that $|P(u,v)| = k$ for all vertices $u$ and $v \in N_k(u)$,
and by assuming that we have a schedule that uses every edge at every
time step.  That is, the makespan is at least $\sigma$, where
\[
\sigma = \frac{1}{nd} \sum_{u \in V} \sum_{k=1}^{D} kn_k(u)
       \geq \frac{1}{nd} \sum_{u \in V} \left( \sum_{k=1}^{D-1} kd^{k-1} + D(n-M(d,D-1)) \right)
       = \sum_{k=1}^{D-1} kd^{k-1} + \frac{D}{d}\left(n-M(d,D-1)\right)
\]
where the inequality uses the assumption that the graph is dense.  After
rewriting $\mu(d,D)$ in the form $\sum_{k=1}^{D-1} kd^{k-1} + \frac{D}{d}d^D$,
we can then deduce
\[  
\frac{\sigma}{\mu} = 1 + \frac{\sigma-\mu}{\mu} = 1 + \frac{D(n-M(d,D))}{d\mu}.
\]
Clearly this quantity goes to 1 as $n$ approaches $M(d,D)$.
\end{proof}\\

Theorem~\ref{thm:dense} shows that our regular routing scheme is 
approximately optimal for dense $d$-regular directed graphs.  As these
graphs provide the structure for the construction of highly efficient 
networks, this scheme should be useful in practice.

\section{Routing on line graphs}\label{sec:lg}

The website~\cite{deg-diam} maintains a list of the highest-density 
$d$-regular graphs of diameter $D$ that we know of, for all pairs of 
positive integers $(d,D)$.  At this moment in time, the family of Kautz 
graphs occupies every entry for $d>2$, and because of this, Kautz 
graphs are popular in the computer network community. There is work 
demonstrating a practical approach to their construction as 
well~\cite{wadi}, and hence machines with Kautz interconnection 
networks have been built and sold for practical use~\cite{sicortex}.

One way to define Kautz graphs is as follows (we will provide another 
definition in Section~\ref{sec:FDF}).  Let $\KZ(d,1)$ be the bidirected 
complete graph on $d+1$ vertices.  Then $\KZ(d,D)$ can be defined 
inductively as $\KZ(d,D) = L(\KZ(d,D-1))$ for $D\geq2$, where by $L(G)$ 
we mean the directed line graph of $G$.  That is, given a digraph 
$G=(V,E)$, $L(G)$ has a vertex for every edge $(u,v) \in E$ and a 
directed edge between vertices $(u,v)$ and $(v,w)$.

The vertex set of $\KZ(d,D)$ has size $(d+1)d^{D-1}$.  Thus, $\KZ(d,D)$ 
is dense, as defined in the previous section.  Namely, 
\[
(d+1)d^{D-1} = d^D + d^{D-1} > d^D > \frac{d^D-1}{d-1} = M(d,D-1)
\]
Hence our regular routing scheme is close to optimal for $\KZ(d,D)$.
However, as we will now show, we can do a bit better.

\begin{theorem}
If $G$ has a walk cover with indices from the set $K$, then $L(G)$ has 
a walk cover with indices from the set $K+1$. 
\end{theorem}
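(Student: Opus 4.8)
The plan is to establish an explicit bijection between walks in $G$ and walks in $L(G)$ that shifts lengths by exactly one. Recall that a vertex of $L(G)$ is an edge $(u,v)$ of $G$, and an edge of $L(G)$ goes from $(u,v)$ to $(v,w)$. The key observation I would exploit is that a walk of length $k$ in $L(G)$ is a sequence of consecutive $L(G)$-vertices $(x_0,x_1),(x_1,x_2),\ldots,(x_k,x_{k+1})$, which corresponds precisely to a walk $x_0 x_1 \cdots x_{k+1}$ of length $k+1$ in $G$. So a walk of length $k$ in $L(G)$ ``is'' a walk of length $k+1$ in $G$, and this is exactly the $+1$ shift asserted in the statement.

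First I would fix a walk cover $W$ of $G$ with index set $K$, so that for every ordered pair of vertices $(a,b)$ of $G$ there is a unique walk from $a$ to $b$ in $W$, and its length lies in $K$. I then need to produce, for every ordered pair of vertices of $L(G)$ — that is, every ordered pair of edges $\big((u,v),(s,t)\big)$ of $G$ — a unique walk in $L(G)$ from $(u,v)$ to $(s,t)$, with length drawn from $K+1$. The natural candidate is to route from $(u,v)$ to $(s,t)$ by taking the unique $W$-walk $P$ from $v$ to $s$ in $G$ and prepending/appending the right endpoints: the $G$-walk $u,v,\ldots,s,t$ obtained by concatenating the edge $(u,v)$, then $P$, then the edge $(s,t)$, translates under the line-graph correspondence into an $L(G)$-walk starting at the vertex $(u,v)$ and ending at the vertex $(s,t)$. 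If $P$ has length $k \in K$ (i.e.\ $k$ edges, so $k+1$ vertices from $v$ to $s$), then the full $G$-walk $u,\ldots,t$ has $k+2$ vertices beyond counting issues, and the associated $L(G)$-walk has length $k+1 \in K+1$, as required.

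The main obstacle, and the step I would spend the most care on, is verifying uniqueness and a clean definition of the covering walk in every degenerate case. When $v = s$ the $W$-walk from $v$ to $s$ is the length-$0$ walk (the trivial walk at the vertex), forcing an index $0 \in K$ and a length-$1$ walk in $L(G)$, namely the single edge $(u,v)\to(v,t)$; I must confirm this edge exists and is the intended walk. More delicate is checking that distinct target pairs yield distinct walks and that the assignment is well defined when the source and target edges share vertices, or when $(u,v)=(s,t)$ (the diagonal case, where $W$ must supply a walk from $v$ to $u$ and the resulting $L(G)$-walk is a closed walk from $(u,v)$ to itself). I would argue uniqueness by inverting the correspondence: any $L(G)$-walk from $(u,v)$ to $(s,t)$ determines a $G$-walk whose first edge is $(u,v)$ and last edge is $(s,t)$, hence whose internal portion is a $v$-to-$s$ walk; since $W$ supplies a \emph{unique} such internal walk, the covering walk in $L(G)$ is forced and unique.

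Finally I would assemble these observations into the conclusion: the collection of all such walks forms a walk cover of $L(G)$, since every ordered pair of $L(G)$-vertices receives exactly one walk, and every length produced is of the form $k+1$ with $k \in K$, so the index set is contained in $K+1$; conversely each $k \in K$ is realized, giving index set exactly $K+1$. The heart of the proof is thus the length-shifting bijection between $G$-walks and $L(G)$-walks together with the bookkeeping that the unique internal $v$-to-$s$ segment is inherited from $W$, and I expect the only real subtlety to be the careful handling of the trivial and diagonal cases.
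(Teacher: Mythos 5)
Your proposal is correct and follows essentially the same route as the paper: given two line-graph vertices $e=(u,v)$ and $f=(s,t)$, take the unique covering walk $P$ from $v$ to $s$ in $G$ (or from $v$ to $u$ in the diagonal case $e=f$) and observe that the concatenation $ePf$ corresponds to an $L(G)$-walk of length $|P|+1$. Your extra care with uniqueness and the degenerate cases is sound and somewhat more explicit than the paper's two-sentence argument, but it is the same construction.
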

\begin{proof}
Let $e=(u,v)$ and $f=(w,x)$ be two distinct 
vertices in $L(G)$ ($v$ and $w$ might be equal). Then for some $k$ there 
is a walk $P$ in $G$ from $v$ to $w$ with $k$ edges. Then the walk $e P 
f$ has $k+2$ vertices in $L(G)$ and thus is a walk of length $k+1$. Now 
suppose $u=w$ and $v=x$ so that $e=f$. In $G$ there is a walk $P$ with 
$k$ edges from $v$ to $u$ so that again $e P e$ is a walk with $k+1$ 
edges from $e$ to $e$.
\end{proof} \\

{\bf Application 1: Kautz Graphs}. Consider $\KZ(n-1,1)$, the bidirected 
complete graph on $n$ vertices, and let $K=\{0,1\}$. Notice that we 
have to include 0 because there are no loops and hence no paths of 
length 1 from a vertex to itself. But now by induction, the iterated 
line graph of degree $n-1$ is covered by sets with indices $\{D-1, D\}$ 
where $D$ is the diameter.  Using this walk cover to define the routes
for an all-to-all, we see that we can use the parts of the schedule for the 
regular routing restricted to paths of lengths $D-1$ and $D$; indeed,
each time step in the regular routing is devoted to paths of a common
length.  Thus, we can schedule an all-to-all for $\KZ(d,D)$ in time
$\sum_{k=D-1}^{D} kd^{k-1}$.

\begin{Lemma}\label{lem:KZdense}
If $G$ has degree $d$ and diameter $D$ and more than $(d+1) d^{D-1}$ 
vertices, any cover has more than 2 indices.
\end{Lemma}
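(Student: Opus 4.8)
The plan is to prove the contrapositive: if a walk cover of $G$ has at most two indices, then $n\le(d+1)d^{D-1}$. The engine of the argument is a one-line counting bound seen from a single source. Fix any vertex $u$. Since the cover supplies exactly one walk from $u$ to each of the $n$ vertices (including $u$ itself), there are exactly $n$ cover walks issuing from $u$. Because $G$ has out-degree $d$, there are only $d^{k}$ walks of length $k$ starting at $u$, so at most $d^{k}$ of the cover walks from $u$ can have length $k$. Summing over the lengths that actually occur gives
\[
n \;\le\; \sum_{k\in K} d^{k}.
\]
This inequality holds for every source, and it is what I would build everything on.

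Next I would specialize to $|K|\le 2$. If $K=\{a\}$ then $n\le d^{a}$, and if $K=\{a,b\}$ with $a<b$ then $n\le d^{a}+d^{b}$. Assuming the indices do not exceed the diameter, so that $b\le D$ and hence $a\le D-1$, the right-hand side is largest at $a=D-1$, $b=D$, yielding
\[
n \;\le\; d^{D-1}+d^{D} \;=\; (d+1)d^{D-1}.
\]
Taking the contrapositive, $n>(d+1)d^{D-1}$ forces $|K|\ge 3$, which is exactly the claim. The bound is tight: $\KZ(d,D)$ has precisely $(d+1)d^{D-1}$ vertices and, by Application~1, a cover with the two indices $\{D-1,D\}$, so one cannot hope to push past $(d+1)d^{D-1}$ while keeping two indices.

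The delicate point, and the step I expect to be the main obstacle, is the bracketed assumption that the indices may be taken at most $D$, i.e.\ that the largest index $b$ satisfies $b\le D$. A cover walk need not be a shortest path, and the bare inequality $n\le d^{a}+d^{b}$ does \emph{not} by itself exclude a two-index cover once one allows $b>D$, since $d^{b}$ then grows past $(d+1)d^{D-1}$. I would address this by restricting attention to the covers that are actually relevant to the routing schemes of the previous sections, namely those assembled from walks of length at most the distance between their endpoints: every pair $(u,v)$ has $\mathrm{dist}(u,v)\le D$, so a length at most $D$ is always available, and these are precisely the short-walk covers one uses to schedule an all-to-all efficiently. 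Making this reduction rigorous, arguing that confining the indices to $\{0,1,\dots,D\}$ loses no cover of interest, is the crux; once it is in place, the counting bound above closes the proof immediately.
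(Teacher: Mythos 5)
Your proposal is, in substance, exactly the paper's proof: fix a source vertex, note that a cover supplies one walk to each of the $n$ vertices, observe that there are at most $d^k$ walks of length $k$ from that vertex (the paper phrases this as words of length $k$ in a 1-factorization $F_1,\dots,F_d$), and conclude that two indices capped at $D$ permit at most $d^{D-1}+d^D=(d+1)d^{D-1}$ walks, hence at most that many vertices. The ``delicate point'' you flag --- that the definition of a walk cover does not force the indices to be at most $D$, so that a two-index cover with an index exceeding $D$ escapes the counting bound --- is a genuine subtlety, but it is not something the paper resolves and you missed: the paper's proof silently makes the same assumption, asserting without justification that the largest number of words using two indices is $d^{D-1}+d^{D}$. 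So your argument matches the published one, and is in fact more candid about the implicit restriction to indices at most $D$, which does hold in both of the paper's applications (the Kautz cover $\{D-1,D\}$ and the Alegre cover $\{0,3,4\}$).
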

\begin{proof}
Let $F_{1}, F_{2}, \cdots, F_{d}$ be a 1-factorization of $G$. Then 
each walk of length $k$ starting from a fixed vertex $v$ is a word of 
cardinality $k$ in the $F_{i}$. The number of distinct words of 
cardinality $k$ is $d^{k}$ so the largest number of words using two $k$ 
is $d^{D-1}+d^{D}=(d+1) d^{D-1}$.
\end{proof} \\

{\bf Application 2: The Alegre Graph Family}. The Alegre 
graph~\cite{alegre} has 25 vertices and is the densest $2$-regular 
graph with diameter $4$ that is known.  By iteratively taking line 
graphs, we produce the densest $2$-regular graphs of diameter $D \geq 
4$ that are known~\cite{deg-diam}.  The Alegre graph and is covered by 
the set with indices $K=\{0,3,4\}$. This is a cover of minimum 
cardinality by Lemma~\ref{lem:KZdense}.  Every iterated line graph also 
has a minimal cover of cardinality 3.  Thus, for this family of graphs, 
we can provide schedules for all-to-all that are a bit better than the 
general regular routing, as we did for the family of Kautz graphs.

\begin{theorem}
Let $\left\{W_{k} \mid k \in K\right\}$ be a walk cover of $G$. Then in 
the associated walk cover of $L(G)$, $w_{k+1}(L)=d^{2} w_{k}(G)$.
\end{theorem}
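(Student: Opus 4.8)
The plan is to exhibit an explicit bijection between the length-$(k+1)$ walks in the walk cover of $L(G)$ and the triples consisting of a length-$k$ walk of the cover of $G$ together with a choice of one in-edge and one out-edge, and then simply count. First I would recall the correspondence used in the previous theorem: a vertex of $L(G)$ is an edge $e=(u,v)$ of $G$, and the walk of the cover of $L(G)$ joining $e=(u,v)$ to $f=(w,x)$ is $ePf$, where $P$ is the unique walk of the cover of $G$ running from $v=\mathrm{head}(e)$ to $w=\mathrm{tail}(f)$. The key observation, already noted there, is that the length of $ePf$ exceeds the length of $P$ by exactly one. Hence the length-$(k+1)$ walks in the cover of $L(G)$ are precisely those whose connecting walk $P$ lies in $W_k(G)$.

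Next I would do the local count. Fix a walk $P\in W_k(G)$, say from $v$ to $w$. A pair $(e,f)$ of vertices of $L(G)$ gives rise to $ePf$ exactly when $\mathrm{head}(e)=v$ and $\mathrm{tail}(f)=w$; that is, $e$ must be an in-edge of $v$ and $f$ an out-edge of $w$. Since $G$ is $d$-regular, $v$ has exactly $d$ in-edges and $w$ exactly $d$ out-edges, and these two choices are independent, so there are exactly $d^{2}$ such pairs $(e,f)$. The diagonal case $e=f$ that was handled separately in the previous proof needs no special treatment here: it arises automatically, as the single instance in which the chosen in-edge of $v$ coincides with the chosen out-edge of $w$, namely when $e=f=(w,v)$.

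Finally I would assemble the bijection. Because the cover of $G$ contains a unique walk between each ordered pair of vertices, distinct walks $P,P'\in W_k(G)$ have distinct endpoint pairs $(v,w)$, so the families of pairs $(e,f)$ they generate are disjoint; and conversely every length-$(k+1)$ walk of the cover of $L(G)$ equals $ePf$ for one and only one $P$. Summing the local count over all $P\in W_k(G)$ therefore yields $w_{k+1}(L)=\sum_{P\in W_k(G)} d^{2}=d^{2}\,w_k(G)$.

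I do not expect a genuine obstacle; the only point requiring care is verifying that $P\mapsto\{ePf\}$ really is a $d^{2}$-to-one covering of all length-$(k+1)$ walks with no omissions and no overlaps, and this follows from the uniqueness built into the definition of a walk cover together with the $d$-regularity of both $G$ and $L(G)$. As a consistency check I would note that summing over all admissible $k$ gives $\sum_k w_{k+1}(L)=d^{2}\sum_k w_k(G)=d^{2}n^{2}=(dn)^{2}$, the square of the number of vertices of $L(G)$, exactly the total number of walks any cover of $L(G)$ must contain.
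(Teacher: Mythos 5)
Your proof is correct and takes essentially the same approach as the paper's: for each walk $P(v,w)$ in $W_k(G)$ there are $d$ choices of in-edge $e$ at $v$ and $d$ choices of out-edge $f$ at $w$, giving $d^2$ walks $ePf$ of length $k+1$ in $L(G)$. The paper states this in two sentences; your added verification of disjointness and surjectivity (via the uniqueness in the definition of a walk cover) is a careful elaboration of the same count, not a different route.
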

\begin{proof}
For each walk $P(v, w)$ in $W_{k}(G)$ and edges $u v$ and $w x$, the 
walk $(u v) P(v, w)(w x)$ in $L$ has length $k+1$. There are $d^{2}$ 
such $u$ and $x$.
\end{proof} \\

{\bf Observation}. Note that if $G$ has $n$ vertices then 
$\sum_{k \in K} w_{k}=n^{2}$ and $\sum_{k \in K} w_{k+1}(L)=\sum_{k \in 
K} d^{2} w_{k}=(d n)^{2}$, as expected.

\section{Farthest-Distance-First Routing}\label{sec:FDF}

Given a routing scheme $S$ on a digraph $G$, there are many ways of
constructing a routing schedule. If one knows the 
topology of $G$ and structure of $S$, then it is often the case that 
efficient schedules can be constructed, as we have seen above.  However, 
these schedules can suffer in practice from not being flexible 
enough to deal with practical impediments like broken links or the 
lack of global synchronicity.

An alternative approach would be to use a routing algorithm that 
applies to any given $G$ and $S$. The efficiency of the algorithm can 
be evaluated in terms of the congestion and dilation, and recall that 
both $C$ and $D$ are lower bounds on the length of any schedule. In 
their seminal work~\cite{LMR}, Leighton-Maggs-Rao showed that $O(C + 
D)$-time schedules always exist, although at the time their method was 
non-constructive, as there was no known algorithmic version of the 
Lov\'{a}sz Local Lemma. Since then, their solution has been made both 
algorithmic~\cite{Fast-LMR, harris} and simpler~\cite{Simple-LMR}.

The algorithm of~\cite{LMR} is optimal up to a constant multiplicative 
factor. However, this algorithm may not be preferred if one is 
concerned about the magnitude of this multiplicative factor, nor if 
computing an efficient schedule needs to be done very quickly.  In this 
section we address both of these issues.  We describe conditions on $S$ 
such that, if satisfied, schedules of length $C$ can be achieved with a 
simple local routing strategy, and without the need to precompute a 
global schedule at all\footnote{FDF routing can also produce optimal 
schedules when the optimal value is greater than $C$, but here we are 
more concerned with the case that $C$ is the right answer.}.

The routing scheme analyzed here is \emph{farthest-distance-first}, or FDF. It 
acts according to one simple rule. At any moment in time, each edge $e$ 
of $G$ has a buffer that stores the jobs waiting for their next hop to be 
executed on $e$. The rule is: $e$ executes the job with the most remaining 
hops; ties are broken arbitrarily. It is known that FDF performs 
optimally when $G$ is in a certain class of trees~\cite{leung_book} (all 
in-degrees are 1 or all out-degrees are 1), although, if $G$ is allowed 
to be any directed tree, there are $S$ for which the FDF schedules can be 
arbitrarily poor~\cite{FDF_peis}. Here, we expand the list of instances $(G,S)$ for 
which FDF performs optimally.

\subsection{Main FDF Theorem}

As input, we are given the pair $(G,S)$, where $G$ is a $d$-regular 
network.  Note that we allow $p_1,p_2 \in S$ with $p_1 = p_2$, and 
furthermore we need not assume that paths are shortest paths in $G$. For 
an edge $e \in G$, let $\Sek(e,k)$ be the set of paths $p \in S$ that use $e$ as the 
$k^\text{th}$-to-last edge (i.e. just prior to traversing $e$, $p$ has $k$ hops 
remaining). Let $\Tek(e,k)$ be the last time slot in the FDF schedule of 
$(G,S)$ in which $e$ is used as the $k\text{th}$-to-last edge in a task. We 
define $\Tek(e,D+1)$ to be $0$, where we recall that $D$ is the length of 
the longest path in $S$.

Consider the following two properties of a pair $(G,S)$.

\begin{enumerate}
\item There exists an edge $e$ such that  
\[ \sum_{k=i}^D |S(e,k)| \geq \sum_{k=i}^D |S(f,k)| \]
for all edges $f \in N$ and all $1\leq i\leq D$.  We call such an edge
\emph{dominant}.
\item Let $e$ be a dominant edge.  Then the inequality 
\[ d\sum_{k=1}^{i} |S(e,k)| \geq d\sum_{k=1}^{i} |S(f,k)| - |S(f,i)| \]
holds for all edges $f \in N$ and all $1\leq i\leq D$.  Note that for $i=D$,
this inequality is implied by Property 1.
\end{enumerate}

We are now ready to state our main theorem.

\begin{theorem} \label{thm:FDFmain}
Assume $(G,S)$ satisfies Properties 1 and 2 above with $e$ a dominant edge.  
Then FDF produces an optimal schedule that uses $T(e,1)=C$ ticks.
\end{theorem}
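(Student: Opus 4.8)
The plan is to show two things: first, that the FDF schedule finishes in exactly $\Tek(e,1) = C$ ticks, and second, that $C$ is a lower bound on any schedule, so this is optimal. The lower bound is immediate from the definition of congestion, since $C$ is always a lower bound on $\tau$ as noted in the introduction; the content is in proving that FDF achieves it. Note that $C = \max_f \sum_{k=1}^{D} |\Sek(f,k)|$, the total number of paths using any edge, and by Property 1 this maximum is attained at the dominant edge $e$. So the real task is to bound the makespan of the FDF schedule by $\sum_{k=1}^{D} |\Sek(e,k)|$.

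The key idea I would pursue is to control the quantities $\Tek(f,k)$, the last time a given edge $f$ is used as the $k$th-to-last edge, by induction moving backward from $k = D$ down to $k = 1$. The intuition behind FDF is that a job with more remaining hops is never delayed in favor of a job with fewer, so the ``farthest'' traffic gets pushed through first. I would try to prove, by downward induction on $k$, an inequality of the form
\[
\Tek(f,k) \le \sum_{j=k}^{D} |\Sek(e,j)| \quad \text{for every edge } f,
\]
with the base case $k = D+1$ holding since $\Tek(e,D+1) = 0$ by definition. The target makespan is then recovered by setting $k=1$ and $f=e$, giving $\Tek(e,1) \le \sum_{j=1}^{D}|\Sek(e,j)| = C$.

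To execute the inductive step I would analyze what causes a job to occupy edge $f$ at a late time slot as its $k$th-to-last edge. Such a job arrived at $f$ after completing a $(k+1)$th-to-last hop on some predecessor edge $f'$ feeding into $f$. Because there are $d$ in-edges at the tail of $f$, the arrivals come from $d$ predecessor edges, and the latest arrival time is controlled by the $\Tek(f',k+1)$ values, which the induction hypothesis already bounds. Once a batch of jobs is waiting at $f$ with $k$ hops remaining, the FDF rule guarantees they are served ahead of all jobs with fewer than $k$ remaining hops, so the number of additional ticks needed to clear them is at most $|\Sek(f,k)|$ plus whatever backlog of strictly-farther jobs is still competing. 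This is exactly where Property 2 enters: the inequality $d\sum_{k=1}^{i}|\Sek(e,k)| \ge d\sum_{k=1}^{i}|\Sek(f,k)| - |\Sek(f,i)|$ is precisely the accounting needed to absorb the factor-$d$ fan-in of predecessor edges against the dominance of $e$, ensuring the bound for $f$ never exceeds the bound for $e$.

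The hard part will be making the delay-propagation argument rigorous — specifically, pinning down exactly how the last-use time $\Tek(f,k)$ relates to the last-use times $\Tek(f',k+1)$ across the $d$ predecessors and to the load $|\Sek(f,k)|$, while correctly accounting for the FDF priority rule. A naive argument that simply adds the maximum predecessor finishing time to the local load $|\Sek(f,k)|$ will overcount, because jobs with $k$ remaining hops need not all wait for the very last $(k+1)$-hop arrival before being served; the priority discipline interleaves service with arrival. I expect the crux is to set up the induction so that the two Properties combine to exactly cancel this fan-in overhead, which is why Property 2 is stated with the delicate $-|\Sek(f,i)|$ correction term and the explicit factor of $d$. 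Getting the indices and the off-by-one in the tree-depth bookkeeping to line up so that the telescoping sum collapses to $C$ at $k=1$ is the step I would check most carefully.
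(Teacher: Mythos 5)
Your high-level plan is pointed in the right direction: the invariant you propose, $\Tek(f,k) \le \sum_{j=k}^{D}|\Sek(e,j)|$ for every edge $f$, is essentially the quantity the paper controls (it runs an induction on the dilation $D$ by deleting the last hop of every path, which amounts to the same backward-from-the-destination bookkeeping), and you have correctly located where Property 2's factor of $d$ and the $-|\Sek(f,i)|$ correction must enter. But the proposal stops exactly where the proof begins. The entire content of the theorem is the inductive step you defer --- you write that ``the hard part will be making the delay-propagation argument rigorous'' and that you ``expect the crux is to set up the induction so that the two Properties combine to exactly cancel this fan-in overhead,'' which is an accurate diagnosis of the difficulty but not an argument. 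In particular, your sketch of the step (latest arrival controlled by the predecessors' $\Tek(f',k+1)$, then clear a backlog of size $|\Sek(f,k)|$) is the ``naive argument'' you yourself warn against, and you offer no mechanism to replace it.

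The missing mechanism in the paper is a contradiction argument, not a direct delay bound. Supposing some $f$ has $\Tek(f,1) > \Tek(e,1)$, one locates the \emph{last hole} at time $t$ in $f$'s schedule, observes that every job executed on $f$ after $t$ must have arrived via one of the $d$ in-edges $g_1,\dots,g_d$ at the tail of $f$ (Fact 1 of the paper), pins down the level $i$ with $\Tek(e,i+1) < t \le \Tek(e,i)$, and then counts the jobs of $\Sek(g_j,i)$ that the $g_j$ execute in the window after $\Tek(e,i+1)$. A chain of inequalities --- using the choice of $t$, the assumption $\Tek(f,1)>\Tek(e,1)$, dominance of $e$, and Property 2 --- forces some single $g_j$ to execute strictly more than $|\Sek(e,i)|$ many $i$th-to-last hops in a window of length $\Tek(e,i)-\Tek(e,i+1) = |\Sek(e,i)|$, contradicting the induction hypothesis. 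None of the ingredients of this pigeonhole (the hole at time $t$, the restriction to the subset $\Sek^*(g_j,i)$ of late-scheduled jobs, the identification of the level $i$, and the six-line calculation that combines the two Properties) appears in your proposal, so as written it is an outline of intent rather than a proof.
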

\begin{proof}
We proceed by induction on $D$.  The
case $D=1$ is straightforward, so assume $D > 1$.

Delete the last hop from each $p \in S$ to get a new list of jobs $S'$.  
The dilation $S'$ is $D-1$, so we may apply the induction
hypothesis to $S'$.  Note that the schedule we get for $S'$ is identical 
to the schedule we get for $S$ when we restrict out attention to the tasks 
in $S'$; indeed, FDF gives priority to hops in $S'$ over all jobs in 
$S\setminus S'$.  So to recover the FDF schedule for $S$, we need only to 
consider the placement of the final hops of each job.

By induction, the length of the schedule for $S'$ is $\Tek(e,2) = \sum_{k=2}^D |\Sek(e,k)|$.
Thus, all final-hops for $e$ must be waiting in $e$'s out-buffer at time 
$\Tek(e,2)$, as every edge $f$ in $G$ has completed its next-to-
last hops by that time; in particular, $\Tek(f,k) \leq \Tek(e,2)$ for all
$2 \leq k \leq D$.  So the schedule for $S$ will satisfy 
$\Tek(e,1) = \sum_{k=1}^D |\Sek(e,k)|$.  Note then that $\Tek(e,1)=C$,
since $e$ is dominant.

It remains to show that there is no $f$ for which $\Tek(f,1) > \Tek(e,1)$.  
By way of contradiction, suppose that such an $f$ exists.  Since $e$ is
dominant, it must be the case that the schedule for $f$ contains a hole.
That is, there is a time $t < \Tek(f,1)$ for which $f$ is not executing a task.  
Pick $t$ largest such that this is true.  This implies that
at every time after $t$, $f$ is executing some task whose 
previous hop was on a non-$f$ edge; indeed, if there was a task starting
at $f$ that was still waiting to execute its first hope at time $t$, 
then an FDF schedule would not get $f$ a whole at time $t$.  In other
words:

\begin{Fact}\label{f1}
At every time between $t+1$ and $\Tek(f,1)$, there is a job
using $f$ that first used an incident edge $g_j$, where $1 \leq j \leq d$
and $g_1, g_2, \dots, g_d$ are the edges whose head is the tail of $f$.
\end{Fact} 
 
Our goal will be to show that some $g_j$ contradicts the dominance of $e$.
This is clearly the case if $t>\Tek(e,1)$, so we may assume otherwise.
Find the $1 \leq i \leq D$ such that $\Tek(e,i+1) < t \leq \Tek(e,i)$.
Notice the following:

\begin{figure}[!htb]
\begin{minipage}[t]{0.6\linewidth}
\centering
\begin{overpic}[width=0.9\textwidth]{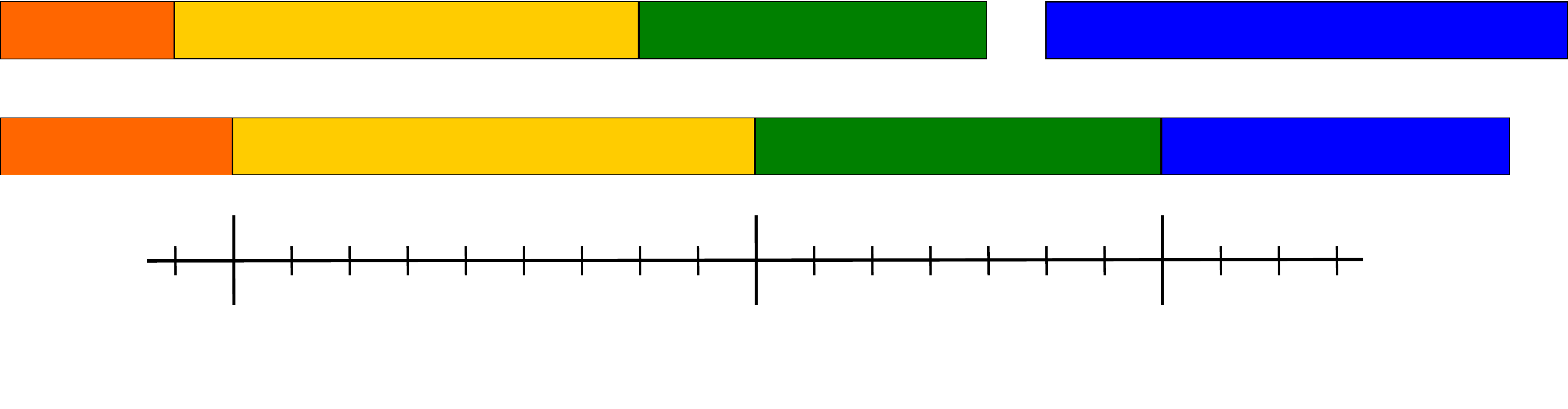}
\put(-4,23){$f$}
\put(-4,15.5){$e$}
\put(7,3){\footnotesize{$\Tek(e,i+2)$}}
\put(40,3){\footnotesize{$\Tek(e,i+1)$}}
\put(69,3){\footnotesize{$\Tek(e,i)$}}
\put(64,5){\footnotesize{$t$}}
\put(3,9){$\dots$}
\put(88.5,9){$\dots$}
\end{overpic}
\end{minipage}
\begin{minipage}[b]{0.4\linewidth}
\centering
\begin{overpic}[width=0.6\textwidth]{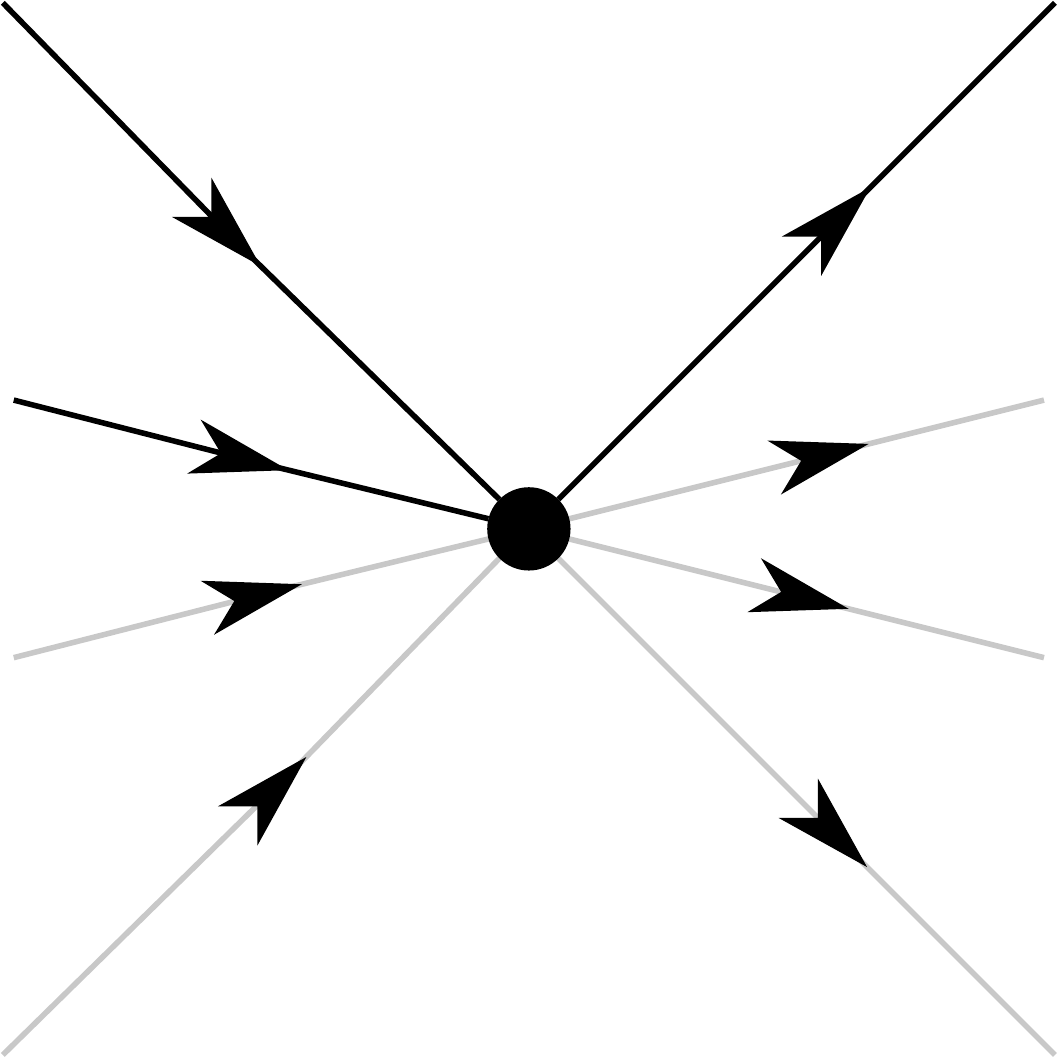}
\put(102,97){$f$}
\put(-8,97){$g_1$}
\put(-6,76){$\vdots$}
\put(-8,62){$g_{d'}$}
\put(-8,38){$g_{d'+1}$}
\put(-6,16){$\vdots$}
\put(-8,0){$g_d$}
\end{overpic}
\end{minipage}
\caption{These figures are meant to help illustrate the definitions in the proof.}
\end{figure}

\begin{Fact}\label{f2}
For every $g_j$ such that there is a $p \in S$ with $p \in \Sek(g_j,i) 
\cap \Sek(f,i-1)$, it must be the case that $g_j$ executed tasks in $\Sek(g_j,i) \setminus \Sek(f,i-1)$
between times $\Tek(e,i+1)+1$ and $t$, as otherwise $f$ would not have a hole 
at time $t$.  Indeed, such a $g_j$ cannot itself have a hole in its
schedule between times $\Tek(e,i+1)+1$ and $t$, as the induction hypothesis
implies that it has received at jobs in $\Sek(g_j,i)$ by time $\Tek(e,i+1)$.
Let $d' \leq d$ be the number of such $g_j$, and without 
loss of generality we will assume these edges are $g_1, g_2, \dots, g_{d'}$.
\end{Fact}

\begin{Fact}\label{f3}
By the induction hypothesis, $\Tek(g_j,i+1) \leq \Tek(e,i+1)$, and hence
all tasks executed on $f$ after time $t$ are in $\cup_{k=1}^{i-1} \Sek(f,k)$.
\end{Fact}

Let $\Sek^*(g_j,i)$ be the subset of $\Sek(g_j,i)$ scheduled at time $\Tek(e,i+1)+1$
or later.  We can now make the following calculation:
\begin{align}
\sum_{j=1}^{d'} |\Sek^*(g_j,i)| & \geq d'(t - \Tek(e,i+1))+ |\Sek(f,i-1)| \\
	& = d'(\Tek(f,1) - \sum_{k=1}^{i-1} |\Sek(f,k)| - \Tek(e,i+1)) + |\Sek(f,i-1)| \\
	& > d'\Tek(e,1) - d'\Tek(e,i+1) - d'\sum_{k=1}^{i-2} |\Sek(f,k)| - (d'-1)|\Sek(f,i-1)|\\
	& = d'(\Tek(e,i+1) + \sum_{k=1}^{i} |\Sek(e,k)|) - d'\Tek(e,i+1) - d'\sum_{k=1}^{i-2} |\Sek(f,k)| - (d'-1)|\Sek(f,i-1)|\\
	& = d'|\Sek(e,i)| + d'\sum_{k=1}^{i-1} |\Sek(e,k)| - (d'-1)\sum_{k=1}^{i-1} |\Sek(f,k)| - \sum_{k=1}^{i-2} |\Sek(f,k)|\\
	& = d'|\Sek(e,i)| + d'\sum_{k=1}^{i-1} |\Sek(e,k)| - d'\sum_{k=1}^{i-1} |\Sek(f,k)| + |\Sek(f,i-1)|
\end{align}    
Inequality (1) follows from Fact~\ref{f1} (for the second term) and Fact~\ref{f2}
(for the first term).  Equation (2) follows from Fact~\ref{f3} and the 
choice of $t$.  The strict inequality (3) uses the assumption that
$\Tek(f,1) > \Tek(e,1)$.  Equation (4) follows from $e$ being dominant.
Everything else is rearranging terms.

Thus, we get that
\[ \sum_{j=1}^{d'} |\Sek^*(g_j,i)| > d'|\Sek(e,i)| \]
whenever
\[ d'\sum_{k=1}^{i-1} |\Sek(e,k)| \geq d'\sum_{k=1}^{i-1} |\Sek(f,k)| - |\Sek(f,i-1)| \]
As Property 2 implies implies this latest relation, we see that there 
must be some $1 \leq j \leq d'$ such that $|\Sek^*(g_j,i)| > |\Sek(e,i)|$.  This
contradicts the strong inductive hypothesis, which finishes the proof.
\end{proof}

\subsection{Application of FDF to all-to-all routing}

Properties 1 and 2 may seem hard to verify in general, but there is at 
least one case for which their verification is quite simple. Theorem~\ref{thm:FDFmain}
applies to all $(G,S)$ that satisfy $|\Sek(e,k)| = |\Sek(f,k)|$ for all 
edges $e,f \in G$. In this case, every edge is dominant. For highly 
symmetric networks---like those that are vertex and edge transitive---and 
highly symmetric communication patterns---like all-to-all---it can 
be the case that $S$ can be chosen to make every edge dominant. For 
example, it is not hard to show that if $G$ is a two-dimensional square 
torus where the dimensions are odd, such an $S$ can be produced from 
shortest paths for an all-to-all.

We can apply Theorem~\ref{thm:FDFmain} to more interesting cases as well.
Recall from Section~\ref{sec:lg} that the family of Kautz graphs 
$\KZ(d,D)$ have walk covers from the set $\{D-1,D\}$.  Let $S$ be the
all-to-all whose routes are defined by these walks.  
Notice that these routes seem wasteful at first glance; 
certainly there are shorter routes we could have used, and jobs using 
these walks occasionally reach their eventual destination before the 
walk has terminated, only to come back later.  However, the congestion 
for $S$ is known to often be better than it would have been had $S$ been  
constructed from shorted paths~\cite{KZ-uniform-1, KZ-uniform-2}. 

Furthermore, we can apply 
Theorem~\ref{thm:FDFmain} to $(\KZ(d,D),S)$. Fix any $v \in G$. The 
routes in $S$ starting at $v$ can be seen as paths from the root of the 
complete $d$-ary tree to all vertices at distance $D-1$ and $D$ from 
the root. Consider $|\Sek(e,k)|$ 
for any edge $e$. If $e$ is part of a walk of length $D$ in 
$k^{\text{th}}$-to-last position, then there are $D-k$ edges proceeding 
it and $k-1$ edges following it. As any of the $d^{D-k}$ walks leading 
up to $e$ exists in exactly one of the $d$-ary trees, and any of these 
walks can be completed to a walk of length $D$ in $d^{k-1}$ ways, we 
see that there are $d^{D-1}$ such walks. Similarly, if $e$ is part of a 
walk of length $D-1$ in $k^{\text{th}}$-to-last position, then there 
are $D-k-1$ edges proceeding it and $k-1$ edges following it, and thus 
there are $d^{D-2}$ such walks. Therefore, $|\Sek(e,k)| = d^{D-1} + 
d^{D-2}$ for all $k < D$. Since $e$ cannot be the 
$D^{\text{th}}$-to-last edge on a walk of length $D-1$, we find 
$|\Sek(e, k)| = d^{D-1}$ when $k = D$. As these calculations are 
independent of $e$, we see that every edge is indeed dominant.

\bibliography{routing}

\end{document}